\documentclass[12pt,leqno]{amsart}

\usepackage{amssymb,amsfonts,amsmath,amsthm}
\usepackage{latexsym}
\usepackage{verbatim}
\usepackage{epsfig}
\usepackage[active]{srcltx}
\usepackage[hypertex]{hyperref}

\usepackage[T1]{fontenc}
\usepackage[latin1]{inputenc}

\topmargin 0cm     
\headsep 1cm
\headheight 0cm
\evensidemargin 0.25cm
\oddsidemargin 0.25cm
 \textwidth 16cm        
 \textheight 21cm     

\newtheorem{theorem}{Theorem}[section]
\newtheorem{conjecture}[theorem]{Conjecture}
\newtheorem{corollary}[theorem]{Corollary}
\newtheorem{lemma}[theorem]{Lemma}

\theoremstyle{definition}

\theoremstyle{remark}








\renewcommand{\Box}{\square}    


\newcommand{\h}{{\rm{ht}}}
\newcommand{\homeo}{{\rm{homeo}}}

\newcommand{\lk}{\mathop{\rm{lk}}\nolimits}
\newcommand{\Cl}{\mathop{{\bC}\rm{lk}}\nolimits}

\newcommand{\Sing}{{\rm{Sing\hspace{2pt}}}}
\newcommand{\rank}{{\rm{rank\hspace{2pt}}}}

\newcommand{\im}{{\rm{Im}}}

\newcommand{\grad}{\mathop{\rm{grad}}}

\newcommand{\e}{\varepsilon}
\newcommand{\m}{\setminus}
\newcommand{\fin}{\hspace*{\fill}$\Box$\vspace*{2mm}}



\newcommand{\bR}{{\mathbb R}}
\newcommand{\bC}{{\mathbb C}}

\newcommand{\bZ}{{\mathbb Z}}



\begin{document}

\title{Beyond Mumford's theorem on normal surfaces}

\author{Mihai Tib\u ar} 

\address{Math\' ematiques, UMR 8524 CNRS,
Universit\'e de  Lille 1, \  59655 Villeneuve d'Ascq, France.}

\email{tibar@math.univ-lille1.fr}

\dedicatory{To Mihnea Col\c toiu, on the occasion of a great  anniversary}

\subjclass[2000]{Primary 32S15; Secondary 32S25, 32S50} 

\keywords{surfaces, complex link, hypersurfaces, real link}


\begin{abstract}

 Beyond normal surfaces there are several open questions concerning 2-dimensional spaces. We  present some results and conjectures along this line.  

\end{abstract}

\maketitle

\setcounter{section}{0}


\section{Questions}

Let $(X, x_0)$ be the germ of a surface. Let us assume that $(X, x_0)$ has an isolated singularity at the origin and moreover that $(X, x_0)$ is \textit{normal}.  Mumford has proved  that the ``triviality'' of the singular germ may be translated into the ``simplicity'' of its link: 

\begin{theorem}\label{t:mumford} \rm (Mumford, 1961) \it
The link of a normal surface is simply connected if and only if the surface is non-singular.
\end{theorem}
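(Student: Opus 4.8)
The plan is to prove both implications, the easy one first. If $(X,x_0)$ is non-singular, then a small neighbourhood of $x_0$ is biholomorphic to a ball in $\bC^2$, so the link is a $3$-sphere $S^3$, which is simply connected. The substance of the theorem is the converse: assuming the link $L$ of the normal surface germ is simply connected, we must show $(X,x_0)$ is smooth. The strategy is to pass to a good resolution $\pi \colon (\tilde X, E) \to (X,x_0)$ with exceptional divisor $E = \pi^{-1}(x_0)$ a normal crossings curve whose components $E_i$ are smooth, and to extract strong topological constraints on the dual graph and the genera of the $E_i$ from the hypothesis $\pi_1(L)=1$.

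\medskip

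The key steps I would carry out are as follows. First, realise $L$ as the boundary of a regular (plumbing) neighbourhood $T$ of $E$ in $\tilde X$; since $\pi$ restricts to a homeomorphism $\tilde X \setminus E \to X \setminus \{x_0\}$ near $x_0$, the link of $(X,x_0)$ is $\partial T$. Second, use Mayer--Vietoris / the plumbing description to compute $H_1(L;\bZ)$ in terms of the intersection matrix $(E_i \cdot E_j)$ and the first Betti numbers of the $E_i$ (equivalently the first homology of the dual graph $G$): one finds $H_1(L;\bZ)$ surjects onto $H_1(G;\bZ) \oplus \bigoplus_i H_1(E_i;\bZ)$ and that the torsion part is $\mathrm{coker}$ of the intersection matrix, which is negative definite by Grauert's criterion. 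The hypothesis $\pi_1(L)=1$ forces $H_1(L;\bZ)=0$, hence $G$ is a tree, every $E_i$ is rational ($g(E_i)=0$), and the intersection matrix is unimodular (determinant $\pm 1$), in fact (being negative definite and unimodular over $\bZ$ with a connected configuration of $\bP^1$'s) one shows it must be the $1\times 1$ matrix $(-1)$. Third — and this is where the full force of simple connectivity rather than just $H_1=0$ enters — one must rule out the remaining rational trees with unimodular negative-definite intersection form by a more delicate argument on $\pi_1$, for instance via van Kampen applied to the plumbing, showing any such tree other than a single $(-1)$-curve yields a nontrivial (often perfect, e.g. binary icosahedral-type) fundamental group. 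Fourth, once $E$ is a single smooth rational curve with self-intersection $-1$, apply Castelnuovo's contractibility criterion: $E$ can be blown down to a smooth point, so $(X,x_0) \cong (\bC^2,0)$, i.e. the germ is non-singular.

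\medskip

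The main obstacle is the third step: upgrading the homological conclusion to the topological one. Knowing $H_1(L;\bZ)=0$ already pins down the resolution graph to be a tree of rational curves with unimodular negative-definite form, but a priori such graphs could have a nontrivial \emph{perfect} fundamental group (this is exactly the phenomenon behind Poincar\'e-sphere-like links), so one genuinely needs the hypothesis $\pi_1(L)=1$, not merely $H_1(L)=0$. The clean way to close this gap is to invoke the negative-definiteness together with a result that a negative-definite unimodular lattice supporting a connected ``resolution-type'' graph of $(-1)$- and $(-2)$-and-lower curves must, after allowing Castelnuovo blow-downs, reduce to the empty graph only via a single $(-1)$-curve; combined with Mumford's own van Kampen presentation of $\pi_1(L)$ (in which the relations come from the rows of the intersection matrix and the meridians of the $E_i$), one checks directly that this presentation is that of the trivial group precisely for the $(-1)$-curve case. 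Everything else — the smooth converse, the reduction to a plumbing, the homology computation, and the final Castelnuovo contraction — is standard resolution theory for surfaces.
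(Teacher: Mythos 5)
Note first that the paper does not prove this statement at all: Theorem \ref{t:mumford} is quoted with the citation [Mu], so your sketch can only be measured against Mumford's original argument, whose skeleton (good resolution, plumbing neighbourhood, presentation of $\pi_1$ of the boundary from the dual graph) you have correctly reproduced, together with the easy direction and the $H_1$ computation forcing a tree of rational curves with unimodular negative definite intersection matrix.

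There is, however, a genuine gap exactly at the step you yourself identify as the main obstacle, and the way you propose to close it does not work. The parenthetical claim in your second step --- that a connected negative definite \emph{unimodular} configuration of $\bP^1$'s must be a single $(-1)$-curve --- is false: the $E_8$ lattice is negative definite and unimodular, and the $E_8$ tree of eight $(-2)$-curves is precisely the minimal resolution graph of $x^2+y^3+z^5=0$, whose link is the Poincar\'e homology sphere ($H_1=0$, $\pi_1$ the binary icosahedral group). The same example destroys your proposed repair in the third step: the $E_8$ graph contains no $(-1)$-curves whatsoever, so ``allowing Castelnuovo blow-downs'' does not reduce it, and no lattice-theoretic statement of the kind you invoke can exist, since the lattice data of $E_8$ is indistinguishable homologically from triviality of the link. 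You are therefore thrown back on the genuinely hard content of Mumford's theorem: proving that for \emph{every} nontrivial negative definite rational tree the plumbing presentation defines a nontrivial (possibly perfect) group. Saying that ``one checks directly that this presentation is that of the trivial group precisely for the $(-1)$-curve case'' is an assertion of the conclusion, not a proof; Mumford establishes nontriviality by a case analysis on the graph (linear chains give lens spaces with $H_1\neq 0$, star-shaped and more general trees admit explicit surjections onto nontrivial triangle-group--type quotients), and without some such argument your proof is circular at the decisive point. Two smaller remarks: negative definiteness of the intersection matrix is due to Du Val and to Mumford himself (Grauert's criterion is the converse, contractibility); and in the final contraction step you must use normality of $(X,x_0)$ to conclude $(X,x_0)\cong(\bC^2,0)$ from the blow-down (the resolution is an isomorphism off the exceptional set and both germs are normal) --- this is also exactly where the theorem fails for non-normal germs, as the example in \S\ref{s:example} of the paper shows, so the hypothesis should appear explicitly in your argument.
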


The ``simplicity'' of a surface is understood here as the simplicity of its link.
We recall that the \textit{link} (more precisely, \textit{real link}) of an analytic set $Y$ at some point $y\in Y$ is by definition $\lk_\e(Y,y) := Y\cap \rho^{-1}(\e)$, where  
$\rho : Y \to \bR_{\ge 0}$ is  a non-trivial analytic function such that $\rho^{-1}(0) = \{ y\}$. By the \textit{local conic structure} of analytic sets, result due to Burghelea and Verona \cite{BV}, the link $\lk_\e(Y,y)$ does not depend, up to homeomorphisms, neither on the choice of $\rho$, nor on that of $\e >0$ provided that it is small enough. For instance $\rho$ may be the distance function to the point $y$. 

\noindent
We address here two situations beyond Mumford's setting:

\begin{enumerate}
\item $(X, x_0)$  normal with nontrivial  $\pi_1 (\lk(X, x_0))$. \\
\item $(X, x_0)$ not normal but $\pi_1(\lk(X, x_0))$ trivial. 
\end{enumerate}

\noindent
The issue (a) is discussed in \S\ref{s:1}. By Prill's result and by the more recent one by Mihnea Col\c toiu and the author,  normal surfaces fall into precisely two disjoint classes: the universal covering of the punctured germ $X \setminus \{ x_0\}$ is a Stein manifold or not. The former corresponds to infinite $\pi_1(\lk(X, x_0))$ and the later to finite $\pi_1(\lk(X, x_0))$.

In sections \S 3-\S\ref{s:nonisol} we discuss  several aspects of the issue (b). It turns out that Mumford's theorem does not extend to surfaces with isolated singularities, as we show by an  example in \S \ref{s:example}. In case of non-isolated singularities,  for surfaces in $\bC^3$ we prove a criterion of simplicity in terms of the triviality of the complex link. This also yields an approach to the following ``rank conjecture'': \emph{an injective map germ $g : (\bC^2, 0) \to (\bC^3, 0)$ has  $\rank (\grad g) (0) \ge 1$.}

\section{Steinness of the universal coverings}\label{s:1}

We assume in this section that $(X, x_0)$ is a normal surface. One has the following classical result by D. Prill.

\begin{theorem}\rm \cite{Pr} \it

 If the fundamental group of the link is finite then the surface
$(X, x_0)$ is isomorphic to the quotient of $(\bC^2, 0)$ by a finite linear group.
\end{theorem}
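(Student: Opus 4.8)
The plan is to reduce the statement to Mumford's theorem (Theorem~\ref{t:mumford}) by passing to the universal covering of the punctured germ and then filling in the puncture.

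\emph{Step 1: pass to the covering.} By the local conic structure of analytic sets the inclusion $\lk(X,x_0)\hookrightarrow X^{*}:=X\setminus\{x_0\}$ is a homotopy equivalence, so $\pi_{1}(X^{*})\cong\pi_{1}(\lk(X,x_0))$ is finite, of order $m$ say. Since $(X,x_0)$ has an isolated singularity, $X^{*}$ is a smooth connected complex surface; let $u\colon\widetilde{X^{*}}\to X^{*}$ be its universal covering. This is a finite holomorphic covering of degree $m$, hence $\widetilde{X^{*}}$ is again a smooth connected complex surface, and it carries a free holomorphic action of the deck group $G\cong\pi_{1}(X^{*})$ with $\widetilde{X^{*}}/G\cong X^{*}$.

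\emph{Step 2: fill in one point.} I would now extend $u$ to a finite analytic map of germs. Taking $\widetilde X$ to be the normalization of (a representative of) $X$ in the field of meromorphic functions of $\widetilde{X^{*}}$, the Grauert--Remmert theory of finite analytic coverings produces a normal complex surface $\widetilde X$ together with a finite map $\pi\colon\widetilde X\to X$ that is unramified over $X^{*}$ and restricts there to $u$; in particular $\widetilde X$ is smooth away from $\pi^{-1}(x_0)$. Because the link of a normal surface germ is connected, and the covering $\widetilde{X^{*}}\to X^{*}$ corresponds to the full group $\pi_{1}(\lk(X,x_0))$, the link of $\widetilde X$ along $\pi^{-1}(x_0)$ is the \emph{universal} cover of $\lk(X,x_0)$, which is connected; hence $\pi^{-1}(x_0)$ consists of a single point $\widetilde{x_0}$. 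Thus $(\widetilde X,\widetilde{x_0})$ is a normal surface germ with an at most isolated singularity, whose link is simply connected, and $G$ acts on it holomorphically, freely off $\widetilde{x_0}$, with quotient $(X,x_0)$.

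\emph{Step 3: invoke Mumford and linearize.} Applying Theorem~\ref{t:mumford} to $(\widetilde X,\widetilde{x_0})$ shows that this germ is non-singular, i.e.\ biholomorphic to $(\bC^{2},0)$. Transporting the $G$-action along such a biholomorphism gives a finite group of holomorphic germ automorphisms of $(\bC^{2},0)$, which by H.~Cartan's linearization theorem is holomorphically conjugate to a finite subgroup $G\hookrightarrow GL(2,\bC)$. Freeness of the action on $\bC^{2}\setminus\{0\}$ means that no non-trivial element of $G$ has $1$ as an eigenvalue, so $G$ is a small finite linear group and $(X,x_0)\cong(\bC^{2},0)/G$, as claimed.

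\emph{Main obstacle.} The delicate point is Step~2: one must realise $\widetilde X$ as a genuine normal \emph{analytic} germ rather than merely as a non-compact smooth surface, and check that exactly one point lies over $x_0$. This rests on the extension/normalization theory for finite analytic coverings of normal spaces together with connectedness of the link of a normal surface singularity. Step~3 is classical, but it is worth verifying carefully that the freeness of the action away from the origin is exactly what forces $G$ to be a small subgroup of $GL(2,\bC)$.
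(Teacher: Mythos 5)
The paper gives no argument for this statement at all: it is quoted as Prill's theorem with the citation \cite{Pr}, so there is no internal proof to measure yours against. Your proposal is the standard deduction of this fact from Mumford's theorem (it is essentially Brieskorn's argument rather than Prill's original classification of quotient germs), and it is correct in outline: finiteness of $\pi_1(\lk(X,x_0))\cong\pi_1(X\setminus\{x_0\})$ makes the universal covering of the punctured germ finite; the Grauert--Remmert extension theorem for finite unbranched coverings of a normal space off an analytic subset fills it in to a finite map $\pi\colon\widetilde X\to X$ from a normal germ; connectedness of the universal cover of the link forces a single point over $x_0$; Mumford then gives smoothness of $(\widetilde X,\widetilde x_0)$, and Cartan's linearization of the finite deck group yields $(X,x_0)\cong(\bC^2,0)/G$ with $G\subset GL(2,\bC)$ finite. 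Two small points deserve to be written out if you expand this: (i) the identification of $\lk(\widetilde X,\widetilde x_0)$ with $\pi^{-1}(\lk(X,x_0))$, i.e.\ with the universal cover of the link, is cleanest by taking $\rho\circ\pi$ as the defining function and invoking the Burghelea--Verona independence of the link from the choice of $\rho$; (ii) the deck transformations extend over the filled-in point by the uniqueness part of the Grauert--Remmert extension (or Riemann extension on the normal germ $\widetilde X$), and the induced finite bijective map $\widetilde X/G\to X$ between normal germs is then biholomorphic, which is where normality of $X$ is used a second time. The remark on eigenvalue $1$ and smallness of $G$ is fine but not needed for the statement as quoted. Compared with Prill's own route, your argument buys a proof that sits naturally next to Theorem \ref{t:mumford} in this paper, at the price of importing the extension theorem for finite analytic coverings.
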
 

In particular this shows that the universal covering of the complement $X\setminus \{x_0\}$ is 
 not Stein. One may say that $X$ is of ``concave'' type, whereas the ``convex'' type surfaces would be those for which the universal covering of $X \setminus \{ x_0\}$ is a Stein manifold. 
  It turns out that all non-quotient surface singularities are in fact of convex type:

\begin{theorem}\label{t:coltoiu1} \rm \cite{CT} \it \\
If $\pi_1(\lk(X, x_0))$ is infinite, then the universal covering of $X \setminus \{ x_0\}$ is a Stein manifold.
\end{theorem}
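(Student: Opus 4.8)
The plan is to pass to a resolution, to reduce Steinness of the universal covering to its holomorphic convexity by a surface-specific criterion, to complete the covering to a larger normal surface, and finally to build a plurisubharmonic exhaustion --- the construction in the ``horizontal'' direction being the crux, where infiniteness of the fundamental group has to be used quantitatively.

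First I would fix a good resolution $\pi\colon(Y,E)\to(X,x_0)$ with $E=\pi^{-1}(x_0)$. By Grauert's criterion the intersection form of $E$ is negative definite and $E$ is exceptional, so, after shrinking $Y$, there is a proper plurisubharmonic exhaustion $\varphi\colon Y\to[0,c_1)$ with $\varphi^{-1}(0)=E$, strictly plurisubharmonic on $Y^{*}:=Y\setminus E$, and $Y^{*}$ is biholomorphic to the punctured germ $X\setminus\{x_0\}$. Let $q\colon Z\to Y^{*}$ be the universal covering, so $\pi_1(Y^{*})=\pi_1(\lk(X,x_0))=:G$ is infinite, and we must prove $Z$ is Stein. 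Since $Z$ is a $2$-dimensional complex manifold, it is Stein as soon as it is holomorphically convex and carries no compact curve; and a compact curve in $Z$ would map under $q$ onto a compact analytic curve of $Y^{*}=Y\setminus E$, whereas the only compact curves of $Y$ lie inside $E$. Hence \emph{everything reduces to the holomorphic convexity of $Z$}, i.e. to exhibiting on $Z$ a plurisubharmonic exhaustion that is strictly plurisubharmonic off a compact set.

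Next I would complete $Z$ across the missing curve. Over a deleted neighbourhood of a component $E_i$ of $E$ the covering $q$ is governed by the image in $G$ of the vanishing loop $\gamma_i$ (and by $\pi_1(E_i)$ when $E_i$ has positive genus): where this image is finite the covering extends as a branched covering, where it is infinite it runs off to infinity. Gluing these local completions --- using that the contraction $X$ is Stein to provide a global analytic structure --- yields a normal surface $\widehat Y\supseteq Z$ with $\widehat E:=\widehat Y\setminus Z$ a closed analytic curve over $E$. The topological heart of this step is that \emph{no component of $\widehat E$ is compact}: a compact component would possess a neighbourhood $N$ in $\widehat Y$ with $\partial N$ a graph manifold sitting inside the simply connected $Z$, so the image in $G$ of the fundamental group of the corresponding boundary piece of the link would be finite; but such a piece either is the whole link, forcing $|G|<\infty$, or is a proper piece, which then contains an incompressible torus and contributes $\bZ^{2}$ to the image (the apparent exceptions, where a Hirzebruch--Jung string contracts to a cyclic quotient point, do not arise since $x_0$ is the only singular point --- unless $X$ itself is a quotient and $G$ is finite by Prill). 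Either way $|G|<\infty$, a contradiction. Thus $\widehat E$ is a disjoint union of non-compact Riemann surfaces, and in particular $\widehat Y$ has no compact curve whatsoever; by Siu's theorem $\widehat E$ then has a Stein neighbourhood in $\widehat Y$, whose complement of $\widehat E$ is Stein and gives a Stein neighbourhood in $Z$ of the ``inner'' ends lying over the branched components.

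It remains to build a plurisubharmonic exhaustion of $Z$. The outer end is easy: $-\log(c_1-\varphi)$ is strictly plurisubharmonic on $Y^{*}$ and tends to $+\infty$ toward the strongly pseudoconvex boundary $\{\varphi=c_1\}$, so its pullback, together with $\varphi\circ q$, controls that direction. \emph{The main obstacle} is the remaining, ``horizontal'' direction: no function pulled back from $Y^{*}$ can be unbounded on $q^{-1}(K)$ for $K\subset Y^{*}$ compact, so here infiniteness of $G$ must be used quantitatively. The universal cover of the link has super-quadratic (indeed cubic, or exponential) volume growth, hence is transient and carries non-constant positive superharmonic functions, and one must upgrade this to a strictly plurisubharmonic exhaustion of $Z$ --- by an $L^{2}$ $\bar\partial$-argument in the spirit of Napier--Ramachandran, by a potential read off the graph-manifold geometry of the link, or by proving directly that the completion $\widehat Y$ is holomorphically convex. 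Granting this last step, $Z$ is holomorphically convex with no compact curves, hence Stein; and Prill's theorem pins down the complementary class --- $G$ finite, $(X,x_0)$ a linear quotient, $Z\cong\bC^{2}\setminus\{0\}$, which is not Stein --- so the dichotomy recorded right after the statement is exact.
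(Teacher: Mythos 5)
There is a genuine gap, and you name it yourself: the entire analytic content of the theorem is deferred in the sentence beginning ``Granting this last step''. Reducing Steinness of the universal covering $Z$ to holomorphic convexity plus absence of compact curves is standard and cheap; holomorphic convexity of an infinite covering is exactly the hard problem (it is the Shafarevich-type question in this local setting), and your proposal offers only a menu of possible methods (an $L^2$ $\bar\partial$-argument ``in the spirit of Napier--Ramachandran'', a potential built from the graph-manifold geometry, or holomorphic convexity of the completion $\widehat Y$) without executing any of them. The remark on volume growth and transience of the covering of the link produces superharmonic functions, not plurisubharmonic ones, and no mechanism is given for the upgrade. In addition, the topological argument that no component of $\widehat E$ is compact, and the gluing that produces $\widehat Y$ at all, are asserted rather than proved; as written they would need the same kind of careful case analysis that the actual proof performs.

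For comparison, the proof sketched in the paper (from \cite{CT}) does not attack the universal covering directly. It splits into two cases according to whether $H_1(\lk(X,x_0))$ is infinite or finite. In the first case, using Mumford's equivalence between infiniteness of $H_1$ of the link and of $\pi_1(A)$ for the exceptional divisor $A$ of a normal crossing resolution, one constructs explicit coverings of $A$ and of a neighbourhood of $A$ which are Stein, by a case analysis on the dual graph (a cycle, contains a cycle, or is a tree), relying on results about Stein spaces and Runge pairs due to Napier, Nori, Gurjar, Col\c toiu and Simha. In the finite $H_1$ case one reduces to the first case by the Waldhausen/JSJ structure of the link: one produces a finite covering of the link with infinite $H_1$, attaches to it a normal surface germ $Z$ with an analytic covering $Z\to X$, and applies the previous case there. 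Finally, the passage to the universal covering is done by K.~Stein's theorem: once \emph{some} covering of $X\setminus\{x_0\}$ is Stein, every covering of it, in particular the universal one, is Stein. This last device is what lets the proof avoid precisely the ``horizontal direction'' obstruction at which your proposal stops; without it, or without actually carrying out one of the constructions you list, your argument does not prove the theorem.
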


The proof uses the structure of a resolution of the normal surface singularity $p : Y \to X$ in the neighbourhood of its exceptional divisor $ A:= p^{-1}(x_0)$ with normal crossings. 

There are two cases to treat: $H_1(\lk(X, x_0)$ is infinite or finite. Starting from Mumford's result that the non-finiteness of $H_1(\lk(X, x_0)$ is equivalent to that of $\pi_1(A)$ one constructs special coverings of $A$ and of its neighbourhood such that the former are Stein. There are mainly 3 cases: the dual graph of $A$ is a cycle, contains a cycle, or does not contain. In order to settle the problem, one uses several results on Stein spaces, Runge pairs and other related properties, due to Napier \cite{Nap}, Nori, Gurjar, Coltoiu \cite{Co1}, Simha \cite{Sim} and others.

 The case of finite homology group $H_1(\lk(X, x_0)$ is reduced to the previous case by topological methods involving the graph manifold structure of the link (or Waldhausen structure \cite{Wa}) and its Jaco-Shalen-Johannson decomposition \cite{JS}, \cite{Jo}. Namely one shows that there is a finite covering of the link with infinite $H_1$ and moreover one may attach to it a normal surface germ $Z$ and an analytic covering $Z\to X$.
 
 Finally, if one constructs a covering of the complement which is Stein, then all the coverings of it are Stein (by K. Stein's theorem \cite{St}) and in particular the universal one is Stein.

The above result \cite{CT}  has been extended recently by Col\c toiu, Joi\c ta and the author to higher dimensions \cite{CJT}.

\section{Simplicity of surfaces via the simplicity of their complex links}

One may ask if there is any ``simplicity'' criterion beyond the class of normal surfaces.
We consider here this question for both types of non-normal surface germs: surfaces with isolated singularity and surfaces with non-isolated singularities.

We first observe that 
for normal surfaces Mumford's criterion is also equivalent to the ``simplicity'' of the \emph{complex link} $\Cl(X,0)$, more precisely,  that \emph{a normal surface germ is non-singular if and only if its complex link is contractible} (Theorem \ref{c:mum}).
Using this new criterion (more precisely, Lemma \ref{t:mum2} below) we show by an  example in \S \ref{s:example}  that Mumford's criterion does not extend to isolated singularities.

Following the same idea of replacing the real link condition by a complex link condition, we show the following extension of Mumford's criterion to non-isolated singularities. This can be be compared to an observation by L\^e D.T in \cite[Rem. 2, pag. 285]{Le2}.

\begin{theorem}\label{t:clink}
Let $(X, 0)\subset (\bC^3, 0)$ be a surface germ with 1-dimensional singularity.
 The complex link $\Cl(X,0)$ is contractible if and only if  $X$ is an equisingular deformation of a curve.
 \end{theorem}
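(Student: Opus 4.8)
The plan is to realise the complex link as a nearby transverse slice and to reduce the statement to the conservation of the Milnor number in the associated family of plane curves. First I would fix a generic linear form $\ell$ on $(\bC^3,0)$ and write $X_t := X \cap \ell^{-1}(t) \cap B_\e$ for the slices, with $0<|t|\ll\e\ll 1$; by definition $\Cl(X,0)=X_\delta$ for generic small $\delta$, while $X_0$ is the transverse hyperplane section, a reduced plane curve germ at $0$. Genericity of $\ell$ guarantees that $B_\e$ is a common Milnor ball for all the $X_t$ and that $\Sing(X_t)=\Sigma\cap\ell^{-1}(t)$, where $\Sigma$ is the one-dimensional singular locus, so that the projection $\ell|_X$ presents $(X,0)$ as a one-parameter family of plane curve germs singular along $\Sigma$. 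With this dictionary, ``$X$ is an equisingular deformation of a curve'' means precisely that this family is equisingular; by the classical equisingularity theory for plane curves (Zariski, Teissier, L\^e) this is in turn equivalent to the constancy of the total Milnor number, $\mu(X_t)\equiv \mu(X_0)$.

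Next I would record two structural facts about $\Cl(X,0)=X_\delta$. As the complex link of a pure two-dimensional germ it is connected, and being a complex curve it has the homotopy type of a wedge of circles; hence $\chi(\Cl(X,0))=1-b_1$ and contractibility is equivalent to $b_1=0$. The heart of the argument is then the Euler-characteristic formula $\chi(\Cl(X,0))=1-\mu(X_0)+\mu(X_\delta)$, where $\mu(X_\delta)$ is the total Milnor number of the finitely many singular points of $X_\delta$ lying on $\Sigma$. I would obtain it by conservation of the Milnor number: inside $B_\e$ the slice $X_\delta$ is a deformation of the germ $(X_0,0)$, and a further smoothing of its singular points produces the Milnor fibre $F$ of $(X_0,0)$, with $\chi(F)=1-\mu(X_0)$. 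Since smoothing a plane curve singularity drops the Euler characteristic by its local Milnor number, one gets $\chi(F)=\chi(X_\delta)-\mu(X_\delta)$, and the formula follows. In particular $b_1(\Cl(X,0))=\mu(X_0)-\mu(X_\delta)\ge 0$, the non-negativity being the upper semicontinuity of $\mu$.

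Combining these, both implications fall out at once: $\Cl(X,0)$ is contractible iff $b_1=0$ iff $\mu(X_0)=\mu(X_\delta)$, i.e.\ iff the family $X_t$ has constant total Milnor number, iff $X$ is an equisingular deformation of a curve. The family of a node specialising to a cusp, where $\mu$ jumps by one and $\Cl(X,0)$ acquires a single vanishing loop, illustrates the non-equisingular side and is a useful check on signs.

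The step I expect to require the most care is the conservation-of-Milnor-number formula and the identifications it rests on: one must verify, using the genericity of $\ell$, that $B_\e$ is simultaneously a Milnor ball for $(X_0,0)$ and for each nearby slice $X_\delta$, that the full smoothing of $X_0$ coincides with a smoothing of $X_\delta$ so that the same Milnor fibre $F$ is used, and that several branches of $\Sigma$ (or higher multiplicity of $\Sigma$ at $0$) are correctly accounted by summing local Milnor numbers. Equally, I must invoke the equivalence between $\mu$-constancy and equisingularity for families of plane curves in exactly the form needed along the section $\Sigma$; this is classical but should be cited precisely, since it is what converts the purely numerical criterion $b_1=0$ into the geometric statement about equisingular deformations.
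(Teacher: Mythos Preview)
Your argument is correct and runs on the same underlying mathematics as the paper's, but the packaging differs. The paper works through the polar curve $\Gamma(\ell,f)=\overline{\Sing(\ell,f)\setminus\Sing f}$ as an explicit intermediary: contractibility of the complex link is equivalent to $\Gamma(\ell,f)=\emptyset$, and this in turn (via L\^e--Ramanujam and the non-splitting principle of L\^e and A'Campo) is equivalent to $\mu$-constancy with a single singular point. Your Euler-characteristic identity $b_1(\Cl(X,0))=\mu(X_0)-\mu(X_\delta)$ is the same formula read without naming the polar curve; the intersection number $(\Gamma(\ell,f)\cdot X_0)$ is exactly that difference. So the two routes are equivalent, yours being slightly more elementary in that it avoids introducing $\Gamma$, the paper's being slightly more geometric in that $\Gamma=\emptyset$ gives a visible obstruction.

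One place the paper is more careful than your write-up, and which you should patch: in the ``$\Leftarrow$'' direction, ``$X$ is an equisingular deformation of a curve'' means there exists \emph{some} projection $l$ exhibiting $X$ as a $\mu$-constant family, and this $l$ need not be the generic linear form $\ell$ used to define the complex link. The paper first shows that the Milnor fibre of $l_{|X}$ is contractible and then invokes the minimality of the complex link among Milnor fibres of functions with isolated singularity to conclude that $\Cl(X,0)$ is contractible. You instead assert that equisingularity ``means precisely that this [generic-$\ell$] family is equisingular'', which silently assumes projection-independence. That is true, but it needs an argument (e.g.\ $\mu$-constancy along a smooth $\Sigma$ forces Whitney regularity, hence all transverse slices have the same $\mu$) or else the minimality step the paper uses; this is exactly the kind of citation you yourself flagged as needing precision.
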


We mean by ``$X$ is an equisingular deformation of a curve'' that $X$ is the total space of a one-parameter family of curves $X_t$ with a single singularity and having the same Milnor number.   
 It does not follow that the curves are irreducible, as shown by the following example:
 $X := \{ xy =0 \} \times \bC \subset \bC^3$ is a surface with contractible complex link
 (see \S \ref{ss:link} for the definition) and a trivial family of reducible curves. However, if we impose in addition that the real link is homeomorphic to $S^3$ then 
 the irreducibility follows too. It turns out that this criterion has an extension to  higher dimensions (Corollary \ref{c:hyp}).

Under the same exchange between the real link condition and the complex link condition, our Theorem \ref{t:clink} corresponds to the following conjecture due to L\^e D\~ung Tr\' ang which holds since about 30 years.  
\begin{conjecture}\label{conj1}
 Let $(X, 0) \subset (\bC^3, 0)$ be a surface with 1-dimensional singular locus.
  Then the real link of $(X, 0)$ is simply connected if and only if  $(X, 0)$ is an equisingular deformation of an irreducible curve.
\end{conjecture}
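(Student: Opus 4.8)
The plan is to read the fundamental group of the real link $\lk(X,0)$ off the same monodromy data that governs the complex link in Theorem \ref{t:clink}, and then to match $\pi_1(\lk(X,0))=1$ with the number of branches of the transverse plane-curve type. Fix a generic linear form $\ell\colon(\bC^3,0)\to(\bC,0)$, let $\Sigma$ be the $1$-dimensional singular locus and $K:=\Sigma\cap S^5_\e$ the associated link of circles inside $L:=\lk(X,0)$. Away from $\Sigma$ the germ $X$ is smooth, so $L\setminus K$ carries the carrousel/fibration structure attached to the family of transverse slices $X\cap\ell^{-1}(t)$: a tubular neighbourhood $\nu(K)$ of the singular link, glued along its boundary to the mapping torus of the monodromy acting on the complex link $\Cl(X,0)$. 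I would compute $\pi_1(L)$ from this decomposition by van Kampen, the three inputs being the homotopy type of $\Cl(X,0)$, the monodromy, and the way the branches of the transverse curve sit over $K$.

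For the implication ``equisingular deformation of an irreducible curve $\Rightarrow$ $L$ simply connected'' I would argue as follows. Equisingularity makes $\Sigma$ smooth, so $K$ is a single circle, and by Theorem \ref{t:clink} the complex link $\Cl(X,0)$ is contractible; irreducibility of the transverse curve means there is a single branch on which the monodromy acts, so the part of $L$ lying over $K$ is connected and its meridians die in $\nu(K)$. Van Kampen then collapses the two contributions and yields $\pi_1(L)=1$, exactly as a single smooth branch produces the simply connected model link $S^3$.

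The reverse implication ``$L$ simply connected $\Rightarrow$ irreducible curve'' is where I expect the genuine obstacle, and it splits into two tasks. First, from $\pi_1(L)=1$ one must recover that $X$ is an equisingular deformation of \emph{some} curve; I would try to deduce the contractibility of $\Cl(X,0)$ from the triviality of $\pi_1(L)$ via the monodromy fibration above and then invoke Theorem \ref{t:clink}. Second — the delicate heart of the matter — one must upgrade ``some curve'' to ``irreducible curve,'' and here bare simple connectivity is a surprisingly weak invariant. For a trivial family $C\times\bC$ with $C$ reducible the real link is a union of copies of $S^3$ meeting along the single circle $K$, and van Kampen still gives $\pi_1=1$ although $H_2(L)\ne 0$; dually, the product of the irreducible cusp already has $L\cong S^3$ through its injective normalization. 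Thus the number of branches is recorded by $H_2(L)$, not by $\pi_1$ alone, and the argument cannot be purely group-theoretic.

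To close this gap I would bring in the finer $3$-dimensional topology of $L$. Since $\Sigma$ is smooth, an equisingular family cannot permute its branches (a nontrivial permutation would force a branch point at which two branches collide, contradicting constancy of the Milnor number), so the branches persist and $X$ splits along $\Sigma$ into exactly as many surface components as the transverse curve has branches. This splitting forces a separating system of tori in the Waldhausen graph-manifold structure of $L$ and extra homology; the remaining problem is to prove that, under the surface-germ hypotheses, the presence of $r\ge 2$ such components is incompatible with $\pi_1(L)=1$ once the Jaco--Shalen--Johannson pieces are controlled. I expect this to be the crux where Conjecture \ref{conj1} resists, and it is precisely the point that the companion statement sidesteps by replacing ``simply connected'' with ``homeomorphic to $S^3$'' (Corollary \ref{c:hyp}); a complete proof of the conjecture as stated will, I believe, require either such a sharpening or a genuinely new $3$-manifold input beyond the fundamental group.
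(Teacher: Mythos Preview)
This statement is presented in the paper as an open conjecture attributed to L\^e D\~ung Tr\'ang, not as a theorem; the paper gives no proof, only the reformulations Conjecture~\ref{conj2} and Conjecture~\ref{conj3} together with the parallel result Theorem~\ref{t:clink} for the complex link. There is therefore nothing in the paper to compare your attempt against.

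Your proposal is, appropriately, not a proof but a programme, and you are explicit about where it stalls. The forward implication you sketch is essentially the known easy direction (the normalization furnishes an injective $(\bC^2,0)\to(\bC^3,0)$, hence $\lk(X,0)\cong S^3$). For the reverse implication you correctly isolate the two subproblems, the first of which is precisely the paper's Conjecture~\ref{conj3}. More to the point, your own example actually undermines the conjecture \emph{as literally worded here}: for $X=\{xy=0\}\subset\bC^3$ the link is two $3$-spheres glued along a common unknotted circle and, as you observe, van Kampen gives $\pi_1(\lk(X,0))=1$, yet the transverse curve is reducible. The standard formulation of L\^e's conjecture uses the hypothesis $\lk(X,0)\cong S^3$ (equivalently, that $X$ is the image of an injective germ $(\bC^2,0)\to(\bC^3,0)$, as in Conjecture~\ref{conj2}), which rules out this non-manifold link; under that sharpening the problem is genuinely open, and your suggested JSJ/graph-manifold approach to the branch count is a reasonable line to explore, though it has not been carried through in the literature.

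One minor slip: Corollary~\ref{c:hyp} is not the place where ``simply connected'' is traded for ``homeomorphic to $S^3$''; that corollary is the higher-dimensional extension of Theorem~\ref{t:clink} and concerns only the complex link.
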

 Several particular classes of surfaces have been checked out confirming the conjecture, see e.g. \cite{Fe}.  L\^e D.T. observed that this conjecture can be equivalently formulated as follows.

 \begin{conjecture}\label{conj2}
 An injective map germ $g : (\bC^2, 0) \to (\bC^3, 0)$ has  $\rank (\grad g) (0) \ge 1$.
\end{conjecture}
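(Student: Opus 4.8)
The plan is to argue the contrapositive: assuming $g$ injective and $\rank(\grad g)(0)=0$, I would reach a contradiction. An injective holomorphic map germ has single-point fibres, hence is finite, so by Remmert's theorem the image $X:=g(\bC^2,0)$ is a pure $2$-dimensional analytic germ in $(\bC^3,0)$, i.e. a hypersurface, and $g:(\bC^2,0)\to(X,0)$ is a finite, bijective, degree-one morphism, that is, the normalization of $X$. Being a homeomorphism onto $X$, the map $g$ identifies $\lk(X,0)$ with the level set $\{\rho=\e\}$ of $\rho:=|g|^2$, a non-negative real-analytic function on $(\bC^2,0)$ with $\rho^{-1}(0)=\{0\}$; by the local conic structure (Burghelea--Verona) this level set is homeomorphic to $\lk(\bC^2,0)=S^3$. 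Thus $\lk(X,0)\cong S^3$ is simply connected. The goal is then to show that this strong topological constraint is incompatible with $\grad g(0)=0$.

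The engine of the argument is the following remark, which I would establish first: \emph{if $\Sigma:=\Sing X$ is a smooth curve, then $\rank(\grad g)(0)\ge 1$.} Indeed, since $g$ is the normalization and $\Sigma$ is exactly the non-normal locus, $\Delta:=g^{-1}(\Sigma)$ is the critical curve of $g$ and $g|_\Delta:\Delta\to\Sigma$ is finite and bijective; as $\Sigma$ is smooth, hence normal, this restriction is the normalization of $\Sigma$ and is therefore an isomorphism. Consequently $\d(g|_\Delta)(0):T_0\Delta\to T_0\Sigma$ is bijective, so for $0\ne v\in T_0\Delta$ one gets $\grad g(0)\,v=\d(g|_\Delta)(0)\,v\ne 0$, contradicting $\grad g(0)=0$. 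Hence the whole problem reduces to proving that $\Sigma$ is smooth.

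I would then split according to $\dim\Sigma$. If the singularity of $X$ is isolated, then $X$ is a hypersurface with isolated singularity, hence Cohen--Macaulay and regular in codimension one, therefore normal; Mumford's Theorem \ref{t:mumford}, together with $\lk(X,0)\cong S^3$, forces $X$ to be smooth, so $g$ is an isomorphism onto $X$ and $\rank(\grad g)(0)=2$. There remains the case in which $\Sigma$ is a curve, where one must deduce the smoothness of $\Sigma$ from the simple connectivity of the real link. This last implication is precisely Conjecture \ref{conj1}: \emph{the real link of $(X,0)$ is simply connected if and only if $X$ is an equisingular deformation of an irreducible curve}, the equisingularity providing exactly a smooth section of singular points, i.e. a smooth $\Sigma$ on which the engine above then closes the argument.

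The main obstacle is therefore Conjecture \ref{conj1} itself. What the present approach supplies is its \emph{complex-link} counterpart, Theorem \ref{t:clink}: contractibility of $\Cl(X,0)$ is equivalent to $X$ being an equisingular deformation of a curve. This alone does not close Conjecture \ref{conj2}: under $\grad g(0)=0$ the engine shows $X$ is \emph{not} an equisingular deformation, so Theorem \ref{t:clink} only tells us that $\Cl(X,0)$ is non-contractible, which is self-consistent. The genuine difficulty is to transfer the real-link hypothesis into the complex-link hypothesis, i.e. to show that the simple connectivity of $\lk(X,0)$ forces $\Cl(X,0)$ to be contractible. The real link fibres over $S^1$ in the open-book (Milnor) fibration of a generic $\ell|_X$ with page $\Cl(X,0)$; since $S^3$ carries open books with non-contractible pages, the desired implication is false for an arbitrary $3$-manifold and must genuinely exploit both the simple connectivity of $S^3$ and the very special monodromy coming from the injectivity of $g$. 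Controlling this monodromy is where I expect the essential work to lie.
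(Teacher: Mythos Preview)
The statement you were asked to prove is \emph{Conjecture}~\ref{conj2}, and the paper does not prove it: it records it as an open problem, attributes to L\^e D.T.\ the observation that it is \emph{equivalent} to Conjecture~\ref{conj1}, and then reformulates the latter, via Theorem~\ref{t:clink}, as Conjecture~\ref{conj3}. There is therefore no ``paper's own proof'' to compare against.

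What your proposal actually accomplishes is a careful rederivation of one direction of L\^e's equivalence: you show that, granted Conjecture~\ref{conj1}, the conclusion $\rank(\grad g)(0)\ge 1$ follows (your ``engine'' that smoothness of $\Sigma=\Sing X$ forces a nonzero differential, plus Mumford's theorem for the isolated case). You then correctly identify that the missing step is precisely Conjecture~\ref{conj1}, or equivalently the passage from simple connectivity of $\lk(X,0)$ to contractibility of $\Cl(X,0)$, which is Conjecture~\ref{conj3}. This is entirely in line with the paper's own assessment, but it is not a proof: the ``essential work'' you locate at the end---controlling the monodromy of the open-book fibration so that $\pi_1(\lk(X,0))=1$ forces a contractible page---is exactly the unresolved content of the conjecture, and nothing in your outline or in the paper supplies it. In short, your reduction is sound and matches the known equivalences, but the statement remains open and your proposal does not close it.
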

The latter looks like a more elementary statement but appears to be very delicate to prove even in particular cases, see \cite{Ne} and \cite{KM}.
  Our Theorem \ref{t:clink} yields in particular the following equivalent formulation of Conjecture \ref{conj1}: 
 
 \begin{conjecture}\label{conj3}
 Let $(X, 0) \subset (\bC^3, 0)$ be a surface with 1-dimensional singular locus.
  If the real link $\lk (X, 0)$ is simply connected then the complex link $\Cl(X, 0)$ is contractible.
\end{conjecture}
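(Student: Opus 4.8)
The plan is to deduce Conjecture~\ref{conj3} from Theorem~\ref{t:clink} by exhibiting the real link of $(X,0)$ as a fibered space built over its complex link. By Theorem~\ref{t:clink} the property ``$\Cl(X,0)$ is contractible'' is equivalent to ``$X$ is an equisingular deformation of a curve'', so it suffices to prove: \emph{if $\lk(X,0)$ is simply connected, then $X$ is an equisingular deformation of a curve}. One may assume $X$ irreducible (the reducible case splits component-wise), write $\Sigma:=\Sing(X)$ for the $1$-dimensional singular locus and $K:=\lk(\Sigma,0)\subset\lk(X,0)$ for its link; then $\lk(X,0)$ is a compact $3$-dimensional space which is a manifold exactly off $K$.

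First I would fix a generic linear form $\ell$ on $\bC^3$, let $L:=X\cap\{\ell=0\}$ be the associated generic hyperplane section, a reduced plane curve germ, and recall that $\Cl(X,0)$ is then the Milnor fibre of $\ell|_X$ at $0$, with $\partial\,\Cl(X,0)\cong\lk(L,0)$. By the carrousel structure of the complex link (see \cite{Le2} and Goresky--MacPherson's treatment of complex links), the restriction $\ell|_X$ gives a decomposition
\[
\lk(X,0)\ =\ M_h\bigl(\Cl(X,0)\bigr)\ \cup\ \bigl(V_1\sqcup\cdots\sqcup V_r\bigr),
\]
where $M_h$ is the mapping torus of the geometric complex monodromy $h\colon\Cl(X,0)\to\Cl(X,0)$ (the identity near the boundary), $r$ is the number of branches of $L$, and each $V_i$ is a solid torus glued to a boundary torus $\partial_i\Cl(X,0)\times S^1$ of $M_h$ so that its meridian is the monodromy circle $\{\mathrm{pt}\}\times S^1$. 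Van Kampen applied to this decomposition, together with $\pi_1(M_h(\Cl(X,0)))\cong\pi_1(\Cl(X,0))\rtimes_{h_*}\bZ$, gives
\[
\pi_1\bigl(\lk(X,0)\bigr)\ \cong\ \pi_1\bigl(\Cl(X,0)\bigr)\,/\,\langle\langle\, x\,h_*(x)^{-1}\ (x\in\pi_1(\Cl(X,0))),\ \ w_2,\dots,w_r\,\rangle\rangle ,
\]
i.e.\ a quotient of the $h_*$-coinvariants of $\pi_1(\Cl(X,0))$ by a finite set of words $w_i$ encoding the cyclic arrangement of the branches of $L$ in the carrousel. As a check, when $X=C\times\bC$ is a product the complex link is a cone, $\pi_1(\Cl(X,0))=1$, and the right-hand side is trivial, in accordance with Theorem~\ref{t:clink}.

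It then remains to prove the purely monodromic assertion: \emph{the above quotient is trivial only when $\pi_1(\Cl(X,0))$ is already trivial} --- equivalently, any degeneration of the transverse type of $X$ at $0$, or a singular $\Sigma$, survives in $\pi_1(\lk(X,0))$. This is the hard part and it is not formal: the coinvariants of a free group can collapse under a finite-order automorphism, and already the abelianised obstruction $\operatorname{coker}(\id-h_*\mid H_1(\Cl(X,0)))$ vanishes exactly when $h_*$ has no eigenvalue $1$ on $H_1(\Cl(X,0))$, which in general can occur with $h_*\neq\id$. The analytic input to bring in is that $h$ is a \emph{carrousel} monodromy: $\Cl(X,0)$ is the Milnor fibre of the plane curve $X\cap\{\ell=\delta\}$, obtained from $L$ by a deformation carrying only the generic transverse-type singularities of $X$ and the polar critical points of $\ell|_X$, and $h$ is a product of right-handed Dehn twists along the resulting vanishing cycles. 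Using L\^e's description of the carrousel and the constraints it imposes on the characteristic polynomial of $h_*$, one should first show that a non-vanishing cycle of $\Cl(X,0)$ cannot be entirely killed by $\id-h_*$ and the $w_i$; and then upgrade this from homology to $\pi_1$ by matching the graph-manifold (Waldhausen, \cite{Wa}) and Jaco-Shalen-Johannson (\cite{JS},\cite{Jo}) decomposition of $\lk(X,0)$ along $K$ --- obtained from a resolution of $X$, the same technology used for normal surfaces in the proof of Theorem~\ref{t:coltoiu1} --- with the carrousel pieces of $h$, so that such a cycle produces an element of infinite order, or a non-dying meridian of $K$, in $\pi_1(\lk(X,0))$. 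Carrying out this last matching rigorously is where the thirty-year difficulty of Conjecture~\ref{conj1} resides.
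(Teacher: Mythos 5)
The statement you are trying to prove is not a theorem of the paper: it is Conjecture \ref{conj3}, which the paper explicitly presents as an \emph{equivalent reformulation} (via Theorem \ref{t:clink}) of L\^e's Conjecture \ref{conj1}, open for some thirty years. The first step of your proposal --- replacing ``$\Cl(X,0)$ contractible'' by ``$X$ is an equisingular deformation of a curve'' using Theorem \ref{t:clink} --- is exactly the paper's observation, and it is correct; but after this step what remains to be shown, namely that simple connectivity of $\lk(X,0)$ forces $X$ to be an equisingular deformation of a curve, \emph{is} (the hard implication of) Conjecture \ref{conj1} itself. So your reduction moves the problem back to the open conjecture rather than solving it.

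The rest of your argument does not close this gap, and you say so yourself. The carrousel-type decomposition of $\lk(X,0)$ into the mapping torus of the complex-link monodromy glued to solid tori over the branches of the generic hyperplane section, and the resulting van Kampen presentation of $\pi_1(\lk(X,0))$ as a quotient of the $h_*$-coinvariants, are a reasonable (and classical) framework; but the decisive assertion --- that this quotient is trivial only when $\pi_1(\Cl(X,0))$ is already trivial --- is precisely where the difficulty of the conjecture lives, and no argument is given: you correctly note that coinvariants of a free group can collapse under a nontrivial automorphism, and the proposed remedy (constraints from L\^e's carrousel on $h_*$, matched against a Waldhausen/JSJ decomposition of the non-manifold link along $K$) is programmatic, with no construction of the resolution graph, no control of the words $w_i$, and no mechanism producing a surviving element of $\pi_1$. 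The reduction ``one may assume $X$ irreducible'' is also unjustified as stated, since simple connectivity of a link of a reducible surface does not decompose component-wise. In short: the proposal contains a correct restatement of the conjecture via Theorem \ref{t:clink}, matching the paper, but no proof of it --- nor does the paper claim one.
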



\section{Simplicity of surfaces via the simplicity of their complex links}

\subsection{A supplement to Mumford's criterion}\label{ss:link}
We recall that the \textit{link} (more precisely, \textit{real link}) of an analytic set $Y$ at some point $y\in Y$ is by definition $\lk_\e(Y,y) :=  \rho^{-1}(\e)$, where  
$\rho : (Y, y) \to (\bR_{\ge 0}, 0)$ is  a proper analytic function such that $\rho^{-1}(0) = \{ y\}$. By the \textit{local conic structure} of analytic sets, result due to Burghelea and Verona \cite{BV}, the link $\lk_\e(Y,y)$ does not depend, up to homeomorphisms, neither on the choice of $\rho$, nor on that of $\e >0$ provided that it is small enough. For instance $\rho$ may be the distance function to the point $y$.
Given an analytic space germ $(Y, 0)$,  any holomorphic function $g : (Y,0)\to (\bC, 0)$ defines a local \textit{Milnor-L\^e fibration} \cite{Le}, which means that :
\begin{equation}\label{eq:loctriv}
 g_| : B_\varepsilon \cap g^{-1}(D_\delta^*) \to D_\delta^*
\end{equation}
  is a locally trivial fibration for small enough $\varepsilon \gg \delta >0$. 
  
  Moreover,  the restriction:
\begin{equation}\label{eq:triv}
 g_| : \partial B_\varepsilon \cap g^{-1}(D_\delta) \to D_\delta
\end{equation}
is a trivial fibration. Then $g^{-1}(\lambda) \cap B_\e$, for $0< |\lambda| \ll \e$, is called \emph{Milnor fibre}.
 
The \textit{complex link} of $(Y, 0)$, denoted by $\Cl(Y,0)$,  is defined as the Milnor fibre of  
 a holomorphic \textit{general} function. 
This notion has been introduced in the 1980's by Goresky and MacPherson \cite{GM} who proved that its homotopy type does not depend on the choices of $\lambda$ and $\e$ provided they are small enough.

 The complex link of a non-singular space germ $(Y,0)$ is clearly contractible, whereas the converse turns out to be not true in general (see e.g. the example of \S \ref{s:example}). The  following result is due to Looijenga \cite[pag. 68]{Lo}: 
\emph{If $(Y,0)$ is a complete intersection with isolated singularity
then the complex link of $(Y,0)$ is contractible if and only if $(Y,0)$ nonsigular.}

We ad up one more equivalence to Mumford's result, which shows that,
in the quest for ``simplicity'', the complex link plays a similar role as the real link.  

 \begin{theorem}\label{c:mum}\rm(A supplement to Mumford's theorem)\it  \\
 Let $(X,0)$ be an irreducible normal surface germ. The following are equivalent:
 \begin{enumerate}
 \item $X$ is non-singular at 0.
 \item $\lk(X, 0)$ is simply connected.
 \item $\Cl(X,0)$ is contractible.
  \end{enumerate}
\end{theorem}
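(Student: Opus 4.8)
The plan is to prove the chain of implications $(a)\Rightarrow(c)\Rightarrow(b)\Rightarrow(a)$, using Mumford's theorem (Theorem \ref{t:mumford}) only for the last step, so that the new content is the equivalence of the complex-link condition with the other two. The implication $(a)\Rightarrow(c)$ is immediate: if $(X,0)$ is smooth then it is a non-singular complete intersection with isolated (empty) singularity, so by Looijenga's result quoted above — or simply because a general linear function on $(\bC^2,0)$ has a disc as Milnor fibre — the complex link is contractible. For $(b)\Rightarrow(a)$ one invokes Mumford's theorem verbatim: $\lk(X,0)$ simply connected implies $X$ non-singular. So the heart of the matter is $(c)\Rightarrow(b)$, or rather proving that contractibility of $\Cl(X,0)$ forces the real link to be simply connected (after which Mumford closes the loop; alternatively one proves $(c)\Rightarrow(a)$ directly).

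The key tool for $(c)\Rightarrow(b)$ should be the relation between the complex link and the real link encoded in the ``complex Milnor–Lê fibration'' over a punctured disc. Concretely, fix a general linear form $\ell:(X,0)\to(\bC,0)$; then $\ell$ restricted to $\partial B_\varepsilon \cap \ell^{-1}(D_\delta)$ is a trivial fibration with fibre a compact manifold with boundary whose interior is $\Cl(X,0)$, while the total space is essentially the real link $\lk(X,0)=\partial B_\varepsilon\cap X$ cut along $\ell^{-1}(0)$. More precisely, the real link $M:=\lk(X,0)$ (a closed oriented $3$-manifold) decomposes as the union of $\ell^{-1}(\bar D_\delta)\cap M$, which is an $S^1$-family of copies of the complex link, and a tubular neighbourhood $N$ of the ``complex link of the complex link'' $L_0:=\ell^{-1}(0)\cap M$, which is the real link of the (possibly singular, possibly reducible) curve germ $(\ell^{-1}(0),0)$. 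A Mayer–Vietoris / van Kampen analysis of this decomposition, using that the $\Cl(X,0)$-bundle piece has the homotopy type of a circle (since the fibre is contractible) and that the gluing is along the boundary $\partial\Cl(X,0)$, should show that $\pi_1(M)$ is generated by loops coming from $L_0$ together with one class from the circle direction, with relations forcing these to die once we know more about $L_0$. Since $L_0=\lk(\ell^{-1}(0),0)$ and $\ell^{-1}(0)$ is a general hyperplane section curve of the normal surface, $L_0$ is a disjoint union of circles, one per branch; the contractibility of $\Cl(X,0)$ should moreover force — via the characteristic polynomial of the monodromy, or via an Euler-characteristic count $\chi(\Cl)=1$ combined with the known formula $\chi(\Cl(X,0)) = 1 - \mu$ for a suitable Milnor number of the generic section — that $\ell^{-1}(0)$ is smooth and irreducible, i.e. $L_0$ is a single unknotted circle bounding the contractible fibre. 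Then van Kampen gives $\pi_1(M)=1$.

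The step I expect to be the main obstacle is controlling $\pi_1$ — not just homology — through the decomposition: van Kampen requires understanding the image of $\pi_1(\partial\Cl(X,0))$ in both $\pi_1(\Cl(X,0))\times$(circle factor) and in $\pi_1(N)$, and a priori $\Cl(X,0)$ being contractible kills its own $\pi_1$ but the boundary $\partial\Cl$ may still carry nontrivial loops that only become trivial after inclusion. Making precise that contractibility of $\Cl(X,0)$ propagates to simple-connectivity of $M$ is exactly the content one wants, and it is likely cleanest to argue in two stages: first use a homological/Euler-characteristic argument (contractible complex link $\Rightarrow$ the generic section curve has Milnor number zero $\Rightarrow$ it is smooth, so $(X,0)$ is smooth along the generic section, hence $(X,0)$ has at most an isolated singularity) to reduce to the \emph{isolated singularity, normal} case, and then in that case the complex link is that of a normal isolated surface singularity, for which one can cite Looijenga's theorem (complete intersection with isolated singularity: contractible complex link $\Leftrightarrow$ nonsingular) — but normal surfaces need not be complete intersections, so one instead reduces via a generic projection to $(\bC^2,0)$, or uses the Lefschetz-type relation between $\Cl(X,0)$ and the link together with Mumford. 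In the write-up I would therefore sequence it as: prove $(a)\Rightarrow(c)$; prove $(c)\Rightarrow$ ``$(X,0)$ has an isolated singularity and its generic hyperplane section is smooth and irreducible'' by the Euler-characteristic/monodromy argument; deduce that $\Cl(X,0)$ is then the Milnor fibre of a generic linear function on an isolated normal surface singularity and relate its reduced homology to $H_*(\lk(X,0))$ and to $\pi_1(\lk(X,0))$ via the local Lefschetz theory of Goresky–MacPherson; and finally conclude $(c)\Rightarrow(b)$, closing the triangle with Mumford's $(b)\Rightarrow(a)$.
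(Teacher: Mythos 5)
Your global strategy is exactly the paper's: the easy implications (a)$\Rightarrow$(c), (b)$\Leftrightarrow$(a) via Mumford, and the real content concentrated in ``contractible complex link $\Rightarrow$ simply connected real link'', proved by decomposing $\lk(X,0)$ by means of the Milnor--L\^e fibration of a general linear function (the open book with binding $L_0=\ell^{-1}(0)\cap\partial B_\e$ and page the complex link). However, your execution of that key step has a genuine gap. You never actually carry out the van Kampen analysis (``should show''), and the substitute you propose for controlling the binding --- deducing that the generic hyperplane section is smooth and irreducible from $\chi(\Cl(X,0))=1$ via a formula of the shape $\chi(\Cl(X,0))=1-\mu(\text{section})$ --- is not available: the complex link is the Milnor fibre of $\ell|_X$, whose Euler characteristic is governed by polar invariants of the pair $(\ell,X)$, not by the Milnor number of the curve $\ell^{-1}(0)\cap X$. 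Worse, smoothness of the generic hyperplane section of a normal surface germ is equivalent to smoothness of $(X,0)$ itself, so that intermediate claim is essentially the theorem you are trying to prove and cannot serve as a stepping stone. The reduction ``first show the singularity is isolated'' is also superfluous: normality of a surface germ already gives an isolated singularity, which is all that the key lemma needs. (A small slip besides: in your decomposition $\ell^{-1}(\bar D_\delta)\cap M$ is the trivial $L_0$-bundle over the disc, i.e.\ the neighbourhood of the binding, while the $S^1$-family of complex links is the part of $B_\e\cap X$ over $\partial D_\delta$; you have the two pieces interchanged.)

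What closes the gap, and is the paper's actual argument (Lemma \ref{t:mum2}), is much simpler than what you sketch: one only needs the boundary $\partial X_\lambda=\partial B_\e\cap X_\lambda$ of the contractible fibre to be \emph{connected}, and this follows from the exact sequence of the pair $(\bar X_\lambda,\partial X_\lambda)$ together with duality, $H_1(\bar X_\lambda,\partial X_\lambda)\cong H^1(X_\lambda)=0$, which forces $H_0(\partial X_\lambda)=\bZ$, i.e.\ $\partial X_\lambda\simeq S^1$. Then $B_\e\cap X_{\partial D_\delta}$ is a disc bundle over the circle (a solid torus), $\partial B_\e\cap X_{D_\delta}$ is, by the trivial boundary fibration \eqref{eq:triv}, homeomorphic to $D_\delta\times S^1$ (another solid torus), and $\lk(X,0)$ is the union of these two solid tori along the torus $\partial B_\e\cap X_{\partial D_\delta}$ with trivial gluing monodromy, hence $S^3$; simple connectivity follows with no hypothesis on smoothness or irreducibility of the section and no delicate $\pi_1$ bookkeeping. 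With this lemma in place your chain (a)$\Rightarrow$(c)$\Rightarrow$(b)$\Rightarrow$(a) is exactly the published proof.
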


\subsection{Proof of Theorem \ref{c:mum}} 
A smooth surface has contractible complex link, and its link is simply connected, thus (a) $\Leftrightarrow$ (c) and (a) $\Leftrightarrow$ (b) are obvious. The implication (b) $\Leftrightarrow$ (a) is of course Mumford's theorem.
 The implication (c) $\Rightarrow$ (b) follows from the next lemma, which assumes ``isolated singularity'' but not necessarily ``normal''.\footnote{We do not know whether the converse of Lemma \ref{t:mum2}  is true or not.}

\begin{lemma}\label{t:mum2}
Let $(X,0)$ be a surface germ with isolated singularity. If $\Cl(X,0)$ is contractible 
then $\lk(X, 0)$ is homeomorphic to $S^3$.


  \end{lemma}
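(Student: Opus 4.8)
The plan is to use the fibration structure coming from a general linear function and the local conic structure of the surface germ. Concretely, choose a general linear form $\ell : (X,0) \to (\bC,0)$; its Milnor fibre $F = \ell^{-1}(\lambda) \cap B_\e$ is by definition the complex link $\Cl(X,0)$, which we are assuming is contractible. Since $(X,0)$ has isolated singularity, the generic fibre of $\ell$ away from $0$ is smooth; for a general $\ell$ the only singular fibre of the Milnor–L\^e fibration over the small disc $D_\delta$ is the central one $\ell^{-1}(0)\cap B_\e$, and that central fibre is a curve germ with (at worst) an isolated singularity at $0$. First I would record that $\lk(X,0)$, a closed oriented $3$-manifold, fibres over the circle $\partial D_\delta$ away from a neighbourhood of $\lk(\ell^{-1}(0),0)$: indeed the restriction $\ell_| : \partial B_\e \cap \ell^{-1}(\partial D_\delta) \to \partial D_\delta$ is (part of) the trivial fibration \eqref{eq:triv}, with fibre $F = \Cl(X,0)$, and the total space of this piece is $\lk(X,0)$ with an open tubular neighbourhood of $N := \lk(X,0)\cap \ell^{-1}(D_\delta)$ removed.

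Next I would analyse $N$. The map $\ell_| : \lk(X,0) \cap \ell^{-1}(D_\delta) \to D_\delta$ is, by \eqref{eq:triv}, a trivial fibration over the disc with fibre $\Cl(X,0)$, hence $N$ is homeomorphic to $\Cl(X,0) \times D_\delta$, a contractible $4$-manifold-with-corners sitting inside $\partial B_\e$; but what I actually need is its boundary piece inside $\lk(X,0)$. A cleaner route: $\lk(X,0)$ decomposes as the union of the piece $E$ that fibres over $S^1$ with fibre $\Cl(X,0)$ and the piece $T$ which is a tubular neighbourhood in $\lk(X,0)$ of the link $K := \lk(\ell^{-1}(0) \cap X, 0)$ of the central curve. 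Since the central curve has isolated singularity, $K$ is a disjoint union of circles (knots) in $\lk(X,0)$, one for each branch; here is exactly the point where I expect to have to work — controlling how many branches there are and how they sit — so I would instead run a Mayer–Vietoris / van Kampen argument using only that $E$ is the mapping torus of the monodromy acting on the contractible $\Cl(X,0)$.

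So the heart of the argument: the piece $E = $ (mapping torus of $h : \Cl(X,0)\to\Cl(X,0)$) is homotopy equivalent to $S^1$ because $\Cl(X,0)$ is contractible, and the binding, $E \cap T$, is a disjoint union of tori (boundaries of the tubular neighbourhoods of the circle components of $K$). Gluing $T$ (a disjoint union of solid tori) back to $E$ along these tori, van Kampen gives $\pi_1(\lk(X,0))$ as obtained from $\pi_1(E)=\bZ$ by adding, for each component, the relation killing the appropriate meridian; combined with the fact that the whole $\lk(X,0)$ bounds the contractible Milnor fibre $B_\e \cap X$ of the map $\rho$ (so $H_1(\lk(X,0))$ is finite by half-lives-half-dies, hence trivial once we know $\pi_1$ is generated by one element of finite order — actually $H_1 = 0$ directly) forces $\pi_1(\lk(X,0)) = 1$. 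A simply connected closed oriented $3$-manifold is $S^3$ by the Poincar\'e conjecture (Perelman), which completes the proof; alternatively, and this is the classically clean finish, one shows $\lk(X,0)$ is obtained from $S^3$ by surgeries dictated by the curve $\ell^{-1}(0)$ and that contractibility of $\Cl(X,0)$ forces that curve to be smooth, whence $\lk(X,0) = S^3$ on the nose.

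The main obstacle I anticipate is the bookkeeping in the decomposition $\lk(X,0) = E \cup T$: one must verify that $\ell$ can be chosen so that \eqref{eq:loctriv}–\eqref{eq:triv} hold with the central fibre the \emph{only} degenerate one, that $K$ is genuinely a disjoint union of unknots-with-framings describable via the Milnor fibration, and that the gluing tori are standardly embedded — equivalently, that $\lk(X,0)$ is an open-book with page $\Cl(X,0)$ and binding $K$. Once that open-book picture is in place, contractibility of the page makes the monodromy homologically trivial and the Wang sequence plus van Kampen deliver $\pi_1 = 1$ and then $S^3$; so the real content is setting up the open-book, and the rest is formal.
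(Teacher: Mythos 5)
Your decomposition of $\lk(X,0)$ is the same one the paper uses: the tube/open-book decomposition induced by the Milnor--L\^e fibration \eqref{eq:loctriv}--\eqref{eq:triv} of a general function, with a mapping-torus piece $E$ fibred over the circle with fibre the contractible complex link, and a piece $T$ which is a union of solid tori around the binding (the link of the central fibre). Where you diverge is the end-game, and as written it contains one step that fails and one that is missing. The failing step is the claim that $H_1(\lk(X,0))$ is finite (or zero) because the link bounds the contractible space $B_\e\cap X$: that space is the cone over the link, so it is not a manifold at $0$ unless $X$ is already smooth, and half-lives-half-dies does not apply; moreover every link of a germ bounds its contractible cone, while links of isolated surface singularities typically have nontrivial $H_1$ (e.g.\ $A_n$ points), so no such conclusion can be drawn without using the hypothesis on $\Cl(X,0)$. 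The missing step is the one fact that makes your van Kampen computation close: that the meridian of each binding component meets each page exactly once, hence maps to a generator of $\pi_1(E)\cong\bZ$. Without this, ``adding the relation killing the appropriate meridian'' gives no information; with it, the computation does yield $\pi_1(\lk(X,0))=1$ (the meridian relation kills the generator $t$, and each longitude, being a power of $t$ in $E$, then kills the core of its solid torus), after which you still need Perelman's solution of the Poincar\'e conjecture to upgrade simple connectivity to a homeomorphism with $S^3$. Your alternative ``surgery on a curve in $S^3$'' finish is not an argument at this level of detail: $X$ is not a hypersurface in $\bC^2$ and no surgery description is justified.

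For comparison, the paper's proof needs neither Perelman nor any control of meridians: it first uses the contractibility of the page, via the homology exact sequence of the pair $(\bar X_\lambda,\partial X_\lambda)$ and duality, to show that $\partial X_\lambda$ is a \emph{single} circle. Hence the binding is connected, the page is a disk, both pieces of the decomposition are solid tori, and $\lk(X,0)$ is a union of two solid tori glued along a torus, which the paper then identifies with $S^3$. That duality count of boundary circles is exactly the ingredient your write-up lacks if you want an elementary finish; alternatively, keep your van Kampen route, but supply the meridian-generator fact and discard the erroneous half-lives-half-dies argument.
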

  
\begin{proof}
Let $g$ be a generic function on the surface $(X,0)$. We refer to the Milnor fibration \eqref{eq:loctriv} of the function $g$ and we use the notation $X_M :=g^{-1}(M) \cap B_\e$ for some subset $M\subset D_\delta$.
 By hypothesis, the fibre
   $X_\lambda$ is contractible, $\lambda\in D_\delta\setminus \{0\}$. Then 
its boundary $\partial X_\lambda := \partial B_\e \cap  X_\lambda$ is a circle $S^1$.
 Indeed, by \eqref{eq:triv} we have that $\partial X_\lambda$ is diffeomorphic to the link of the function $g$, hence it is a disjoint union of circles. It is therefore enough to show that there is a single circle. From the homology exact sequence of the pair $(\bar X_\lambda, \partial X_\lambda)$ and since
$H_1(\bar X_\lambda, \partial X_\lambda)$ is dual to $H^1(X_\lambda)$ thus isomorphic to $H_1(X_\lambda)$,
it follows that $H_0(\partial X_\lambda)= \bZ$. This shows our claim.

For the link $\lk(X,0) := \partial B_\e \cap X$ we have the homotopy equivalence:
\[ \lk(X,0) \stackrel{\h}{\simeq} (B_\e \cap X_{\partial D_\delta}) \cup_{\partial B_\e \cap X_{\partial D_\delta} } \partial B_\e \cap X_{D_\delta}.
 \]
 By (\ref{eq:loctriv}), the space $B_\e \cap X_{\partial D_\delta}$
is a locally trivial fibration over $\partial D_\delta$ with contractible fibre (the complex link) and it is therefore homeomorphic to a full torus $S^1\times D_\delta$. 

Due to (\ref{eq:triv}), the term $\partial B_\e \cap X_{D_\delta}$  is a trivial fibration 
over $D_\delta$  with fibre $\partial X_0 \simeq S^1$, hence homeomorphic to a solid torus $D_\delta\times S^1$. The common boundary $\partial B_\e \cap X_{\partial D_\delta} $ is a trivial fibration over $\partial D_\delta$ with  fibre $S^1$ as shown above, hence homeomorphic to a torus $S^1 \times S^1$. The glueing of the two tori produces a lens space and since this glueing is trivial by the fact that $S^1 \times S^1$ has trivial monodromy, it follows that this lens space is a 3-sphere.

This shows the homotopy equivalence $\lk(X,0)\stackrel{\h}{\simeq} S^3$, hence that
$\lk (X, 0)$ is simply connected. 
\end{proof}

\section{Mumford's criterion does not extend to surfaces with isolated singularity}\label{s:example}

  Let us consider the following low degree example $F : \bC^2 \to \bC^4$, $F(x,y) = 
(x, y^2, xy^3, xy + y^3)$. Then $F$ is injective and proper, with $Y = \im F$ a surface which is not a complete intersection. Since our space $Y$ has an isolated singularity, a function 
$g : (Y, 0) \to \bC$ has an isolated singularity if and only if $g$ has no singularities on  $Y \setminus \{ 0\}$.

  By the general bouquet theorem for functions $(X,0)\to \bC$ with isolated singularities with respect to some Whitney stratification of $X$ \cite{Ti-b}, the Milnor fibre of such a function has the homotopy type of a bouquet  of spaces out of which one is $\Cl (X,0)$. This 
yields a ``minimality'' property of complex links, i.e.
\cite[Corollary 2.6]{Ti-min}:  \it
The complex link $\Cl (X,0)$ is minimal among the Milnor fibres of functions $(X,0)\to \bC$ with isolated singularity.\rm

Consequently, in order to show that the complex link of our space $(Y,0)$ is contractible it is enough to exhibit a function on $Y$ with isolated singularity and with contractible Milnor fibre.
If $z_1$ denotes the first coordinate of $\bC^4$ then consider the restriction ${z_1}_{|Y}$ of the linear function $z_1$. This has isolated singularity on $Y$. Its pull-back by $F$ is the linear function $x$ on  $\bC^2$, which has  a trivial complex link. Since the two Milnor fibres, of ${z_1}_{|Y}$ and of $x$, are homeomorphic by $F$, we have proved via \cite[Theorem 1.1]{Ti-min} that the  complex link of $(Y,0)$ is contractible.
By Lemma \ref{t:mum2}, the link  $\lk (Y, 0)$ is simply connected.  This example shows that Mumford's criterion cannot be extended to surfaces with isolated singularity. 


\section{Surfaces with non-isolated singularity}\label{s:nonisol}

\begin{proof}[\bf Proof of Theorem \ref{t:clink}]
Let $(X,0) \subset (\bC^3, 0)$ be a hypersurface germ defined by $f=0$.

 ``$\Leftarrow$''. If $X$ is an equisingular family of curves, then let $l : X\cap  B_\e \cap l^{-1}(D_\delta) \to D_\delta$ be the projection of the family, for a small enough ball $B_\e$ centered at $0$, and let $X_t := l^{-1}(t)$. 
  Consider the germ of the \emph{polar curve}.   $\Gamma(l,f) := \overline{\Sing(l,f) \m \Sing f}$ of the map $(l,f) : \bC^3 \to \bC^2$. The polar curve of a function $f$ with respect to a linear function $l$ is a old geometric notion which was brought into light in the 1970's by L\^e D.T., see e.g. \cite{Le1}, \cite{LR}, and used ever since by many authors. 
  
 By the equisingularity assumption, the curve $X_t$ has a single singularity of Milnor number $\mu(X_t)$ independent on $t$. This implies that $\Sing X$ has multiplicity equal to $1$ hence it is a non-singular irreducible curve. By \cite{LR} (see also \cite{Le1})  the invariance of $\mu(X_t)$ implies that $\Gamma(l,f) =\emptyset$.  Further on, these imply that $X_t$ is contractible. Then by \cite[Corollary 2.6]{Ti-min} the complex link of $(X,0)$ is also contractible.

\smallskip
\noindent
``$\Rightarrow$''. If the complex link $X_t := g^{-1}(t) \cap B_\e$ is contractible, where $g$ denotes a general linear function, then it follows that $\Gamma(g,f) =\emptyset$. This implies that the total Milnor number $\mu(X_t)$ is equal to 
the Milnor number $\mu(X_0)$ of the germ of $X_0$ at $0$. By the non-splitting principle (cf \cite{Le0}, \cite{AC}), this means that $X_t$ has a single singular point and that $\Sing X$ is a non-singular irreducible curve.
Therefore $X$ is an equisingular deformation of a plane curve (which may be reducible).
\end{proof}

 Since the above proof is the same for a higher dimensional hypersurface $(X,0)$, we actually get following statement, which may be compared to an observation by L\^e D.T. \cite[pag. 285, Rem. 2]{Le2}:

\begin{corollary}\label{c:hyp}
 Let $(X, 0)\subset (\bC^n, 0)$, $n\ge 3$,  be a hypersurface germ with 1-dimensional singularity.
 The complex link $\Cl(X,0)$ is contractible if and only if  $X$ is an equisingular deformation of a $(n-2)$-dimensional isolated hypersurface germ.
\fin
\end{corollary}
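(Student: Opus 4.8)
The plan is to mirror, step by step, the proof of Theorem \ref{t:clink}, noting that all the tools invoked there — the polar curve $\Gamma(l,f)$ of a function with respect to a generic linear form, the invariance of the total Milnor number under the vanishing of the polar curve, the non-splitting principle, and the minimality of the complex link among Milnor fibres of functions with isolated singularity — are dimension-free, or rather work verbatim for a hypersurface $(X,0)\subset(\bC^n,0)$ with one-dimensional singular locus. So the strategy is simply to re-run the two implications in this generality and to check that each cited result applies.

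For the direction ``$\Leftarrow$'': assume $X$ is an equisingular deformation $\{X_t\}$ of an $(n-2)$-dimensional isolated hypersurface singularity $X_0\subset\bC^{n-1}$, with the Milnor number $\mu(X_t)$ constant in $t$. As in the surface case, constancy of $\mu(X_t)$ together with $\dim\Sing X=1$ forces $\Sing X$ to have multiplicity one, hence to be a smooth irreducible curve, and by \cite{LR} (see also \cite{Le1}) it forces $\Gamma(l,f)=\emptyset$ for a generic linear $l$. From $\Gamma(l,f)=\emptyset$ one deduces that the generic hyperplane section $X_t=l^{-1}(t)\cap B_\e$ carries no extra vanishing cycles, so $X_t$ has the homotopy type of the Milnor fibre of the isolated singularity $X_0$; but $X_0$ is non-singular as a germ (it \emph{is} an isolated hypersurface singularity of Milnor number $\mu(X_t)$, and this Milnor fibre is contractible precisely when... — more carefully: $X_t$ is contractible because the $\mu$-constant deformation is, up to homotopy, the trivial family of Milnor fibres, and here one wants this Milnor fibre to be a point). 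Then by minimality of the complex link \cite[Corollary 2.6]{Ti-min} the complex link $\Cl(X,0)$ is a retract of a contractible space, hence contractible. I should be careful here with what ``equisingular deformation of an isolated hypersurface germ'' precisely means in the statement — presumably it is a $\mu$-constant one-parameter family whose special fibre is the given germ — and phrase the argument so that contractibility of $X_t$ follows from that.

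For the direction ``$\Rightarrow$'': suppose $\Cl(X,0)$ is contractible. Taking $g$ a general linear function, $X_t:=g^{-1}(t)\cap B_\e$ realizes the complex link (by the Goresky--MacPherson genericity), so $X_t$ is contractible. Contractibility of the generic hyperplane slice forces $\Gamma(g,f)=\emptyset$: indeed a nonempty polar curve would contribute polar intersection multiplicity, i.e. nonzero reduced homology, to $X_t$. With $\Gamma(g,f)=\emptyset$, the Lê--Iomdine/Teissier-type formula for the total Milnor number along the one-dimensional stratum gives $\mu(X_t)=\mu(X_0)$ where $X_0$ is the germ of the hyperplane slice through $0$; and the non-splitting principle (\cite{Le0}, \cite{AC}) then says that $X_t$ has a single singular point and that $\Sing X$ is a smooth irreducible curve. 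Hence $X$ is, after the change of the ambient coordinate $g$, the total space of a $\mu$-constant family of $(n-2)$-dimensional isolated hypersurface singularities, i.e. an equisingular deformation, as claimed.

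The main obstacle — and really the only point that needs checking rather than copying — is that the two geometric inputs, namely ``$\Gamma(l,f)=\emptyset$ iff the generic slice has no vanishing cycles'' and the non-splitting principle, are genuinely stated (or provable) for $\dim\Sing X=1$ inside $\bC^n$ for all $n\ge 3$, not just $n=3$. The non-splitting principle in the form I want is: for a one-parameter family of isolated hypersurface singularities, constancy of the total Milnor number implies the special member does not split, and this is exactly the content of \cite{Le0} and \cite{AC} in arbitrary dimension, so this causes no trouble. Likewise the relation between the polar curve and the homotopy type of a generic hyperplane section of a space with one-dimensional singular locus is the Lê--Teissier polar-curve machinery, valid in any dimension. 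So the proof really is ``the same'', and the corollary follows; the only honest work is to say a sentence confirming that each cited theorem is stated for hypersurfaces in $\bC^n$ with one-dimensional singular locus and not merely for $n=3$.
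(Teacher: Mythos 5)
Your proposal takes exactly the paper's route: the paper's entire proof of Corollary \ref{c:hyp} is the remark that the proof of Theorem \ref{t:clink} works verbatim in higher dimension, and that is precisely what you re-run and verify (polar curve $\Gamma(l,f)$ of a generic linear form, $\mu$-constancy forcing $\Gamma(l,f)=\emptyset$ via \cite{LR}, the non-splitting principle of \cite{Le0}, \cite{AC}, and the minimality of the complex link \cite[Corollary 2.6]{Ti-min}, all of which are indeed dimension-free). One small correction to the muddled spot you flagged yourself: for $t\neq 0$ the slice $X_t=l^{-1}(t)\cap X\cap B_\e$ is a \emph{singular} hypersurface, not a Milnor fibre of $X_0$; its contractibility follows because $\Gamma(l,f)=\emptyset$ leaves no critical points of $l_{|X}$ outside $\Sing X$, so $X_t\cap B_\e$ retracts onto the local cone at its unique singular point --- not because any Milnor fibre is a point.
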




%


\end{document}